\newtheorem{theorem}{Theorem}[section]
 \newtheorem{lemma}[theorem]{Lemma}
 \newtheorem{proposition}[theorem]{Proposition}
 \newtheorem{remark}{Remark}[section]
 \newtheorem{example}{Example}[section]
\numberwithin{equation}{section}
\theoremstyle{plain}
\theoremstyle{definition}
\newtheorem{Def}[equation]{Definition}
\theoremstyle{remark}
\DeclareMathOperator{\Aut}{Aut}
\DeclareMathOperator{\Br}{Br}
\DeclareMathOperator{\Gal}{Gal}
\DeclareMathOperator{\GL}{GL}
\def\Br{{\rm Br\,}}
\def\Q{{\mathbb Q}}
\def\Gal{{\rm Gal}}
\def\ker {{\rm  Ker}}
\DeclareFontFamily{U}{wncy}{}
\DeclareFontShape{U}{wncy}{m}{n}{%
<5>wncyr5%
<6>wncyr6%
<7>wncyr7%
<8>wncyr8%
<9>wncyr9%
<10>wncyr10%
<11>wncyr10%
<12>wncyr6%
<14>wncyr7%
<17>wncyr8%
<20>wncyr10%
<25>wncyr10}{}
\DeclareMathAlphabet{\cyr}{U}{wncy}{m}{n}
\begin{document}

\title{Counting lattice points in central simple algebras with a given characteristic polynomial}

\author{Jiaqi Xie}

\date{}

\maketitle

\begin{abstract}
    We extend the asymptotic formula for counting integral matrices with a given irreducible characteristic polynomial by Eskin, Mozes and Shah  in \cite{ref1}  to the case of counting elements in a maximal order of certain central simple algebra with a given irreducible characteristic polynomial. 
\end{abstract}

\section{Introduction}

Eskin,  Mozes and Shah in \cite{ref1} counted the integral points in certain homogeneous spaces and determined the precise asymptotic formula for integral matrices with a given characteristic polynomial. More precisely, let $p(\lambda)$ be a monic  irreducible polynomial of degree $n\geq 2$ over $\mathbb{Z}$ and define 
 $$N(p(\lambda), T)=\#\{(x_{ij}) \in M_n(\mathbb{Z}):  \ \det(\lambda I_n -(x_{ij}))= p(\lambda) , \ \sqrt{\sum\limits_{1\leq i, j\leq n}x_{ij}^{2}} \leq T\}$$
 for $T>0$.  If all roots of $p(\lambda)$ are reals and the ring $\mathbb{Z}[\lambda]/(p(\lambda))$ is integrally closed, then 
 $$N(p(\lambda), T)\thicksim \frac{2^{n-1}h_KR_K\omega_n}{|d_{K}|^{\frac{1}{2}}\prod\limits_{i=2}^{n}\Lambda (\frac{i}{2})} \cdot T^{\frac{n(n-1)}{2}}$$
as $T\rightarrow \infty$, where $h_K$, $R_K$ and $d_{K}$ are the class number, the regulator and the discriminant of number field $K=\mathbb{Q}[X]/(p(X))$ respectively, $\omega_n$ is the volume of the unit ball in $\mathbb{R}^{\frac{n(n-1)}{2}}$ and $\Lambda (s)=\pi^{-s}\varGamma (s) \zeta(2s)$. 
In \cite[Theorem 1.2]{ref9}, Shah further removed the assumption that all roots of $p(\lambda)$ are reals and obtained the similar result by replacing $2^{n-1}$ with $2^{r_1}(2\pi)^{r_2}/w$ where $r_1$ is the number of real places of $K$, $r_2$ is the number of complex places of $K$ and $w$ is the number of roots of unity in $K$.  

In spirit of the local-global principle,  Wei and Xu in \cite{ref11} studied this counting problem by applying strong approximation with Brauer-Manin obstruction and pointed out that the above asymptotic formula can be obtained by the product of numbers of all local solutions. 
In this paper, we will extend the above asymptotic formula to central simple algebras based on  \cite{ref11}.  

Let $\mathcal{A}$ be a central simple algebra of degree $n$ over $\mathbb{Q}$. By \cite[(9.3) Theorem]{Rein}, one can define the (reduced) characteristic polynomial $f_a(\lambda)$ for each $a\in \mathcal A$ which is a monic polynomial of degree $n$ over $\mathbb Q$.  Moreover, if $a$ belongs to an order of $\mathcal A$, then $f_a(\lambda)$ is a monic polynomial of degree $n$ over $\mathbb Z$ by \cite[(10.1) Theorem]{Rein}.

Fix an maximal order $\mathfrak o_{\mathcal A}$ of $\mathcal A$ and assume $\mathcal A\otimes_{\mathbb Q} \mathbb R \cong  M_n(\mathbb R)$. Fix an imbedding 
$$\iota: \mathcal A \rightarrow \mathcal A\otimes_{\mathbb Q} \mathbb R \cong  M_n(\mathbb R) ; \ \ a \mapsto a\otimes 1 \in M_n(\mathbb R) . $$  For any given monic polynomial $p(\lambda)$ of degree $n$ over $\mathbb Z$, we define
$$ N_{\mathfrak o_{\mathcal A} }(p(\lambda), T)=\#\{a\in \mathfrak o_{\mathcal A}:  f_a(\lambda)= p(\lambda), \iota(a)=(x_{ij}) \in M_n(\mathbb{R}),  \sqrt{\sum\limits_{1\leq i, j\leq n}x_{ij}^{2}} \leq T\}$$
for $T>0$ and $\mathcal{A}_{p}=\mathcal{A}\otimes \mathbb{Q}_{p}$. The following is the main result of this paper.

\begin{theorem}
    When $\mathcal{A}_{p}$ is a matrix algebra or a division algebra for all primes $p$ and the ring $\mathbb{Z}[\lambda]/(p(\lambda))$ is integrally closed, then
    $$N_{\mathfrak o_{\mathcal A} }(p(\lambda), T)\thicksim (\prod\limits_{p\in S}\frac{n}{e_{p}\cdot \prod\limits_{i=1}^{n-1}(1-\frac{1}{p^{i}})})\cdot\frac{2^{r_{1}}(2\pi)^{r_{2}}h_KR_K\omega_n}{w|d_{K}|^{\frac{1}{2}}\prod\limits_{i=2}^{n}\Lambda (\frac{i}{2})} \cdot T^{\frac{n(n-1)}{2}}$$
    as $T\rightarrow \infty$, where $S=\{p:\mathcal{A}_{p}\ is\ a\ division\ algebra\ over\ \mathbb{Q}_{p}\}$ and $e_p$ is ramification index of extension $\Bbb Q_p[\lambda]/(p(\lambda))$ over $\Bbb Q_p$.
\end{theorem}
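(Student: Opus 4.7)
The strategy is to extend the local--global framework of Wei--Xu \cite{ref11}, based on strong approximation with Brauer--Manin obstruction, from matrix algebras to arbitrary central simple algebras. View
$$X = \{a \in \mathcal{A} : f_a(\lambda) = p(\lambda)\}$$
as an affine $\mathbb{Q}$-variety. Since $p(\lambda)$ is irreducible, $X$ is a single homogeneous space under the conjugation action of $\PGL_1(\mathcal{A})$, with stabilizer $K^*/\mathbb{Q}^*$ at any $\mathbb{Q}$-point, where $K = \mathbb{Q}[\lambda]/(p(\lambda))$. The hypothesis $\mathcal{A} \otimes \mathbb{R} \cong M_n(\mathbb{R})$ makes $\PGL_1(\mathcal{A})(\mathbb{R})$ noncompact, so the analysis of \cite{ref11} decomposes the count as
$$N_{\mathfrak{o}_{\mathcal A}}(p(\lambda), T) \;\sim\; c_\infty(T) \cdot \prod_p c_p, \qquad c_\infty(T) \sim C_\infty \cdot T^{n(n-1)/2}.$$

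Since $\mathcal{A}_{\mathbb R} \cong M_n(\mathbb R)$, the archimedean density agrees with the matrix-algebra case and, by \cite{ref9}, yields the factor $\tfrac{2^{r_1}(2\pi)^{r_2} \omega_n}{w \prod_{i=2}^n \Lambda(i/2)}$ together with the global arithmetic contribution $h_K R_K |d_K|^{-1/2}$ coming from integrating over ideal classes and the regulator. At primes $p \notin S$, $\mathcal{A}_p \cong M_n(\mathbb Q_p)$ and every maximal order is $\GL_n(\mathbb Q_p)$-conjugate to $M_n(\mathbb Z_p)$, so $c_p$ coincides with the matrix-algebra local density already encoded in the Wei--Xu/Shah formula.

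The new computation is $c_p$ at $p \in S$. Here $\mathcal{A}_p$ is a central division algebra of degree $n$ with unique maximal order $\mathfrak{o}_{\mathcal{A}_p}$, and $K_p = \mathbb Q_p[\lambda]/(p(\lambda))$ is a field of degree $n$ with ramification index $e_p$ and residue degree $f_p = n/e_p$. Because conjugation preserves characteristic polynomials and integrality in a $p$-adic division algebra is equivalent to membership in $\mathfrak{o}_{\mathcal{A}_p}$, one has $X(\mathbb Z_p) = X(\mathbb Q_p) \cong \mathcal{A}_p^*/K_p^*$. Writing $c_p$ as (number of $\mathfrak{o}_{\mathcal{A}_p}^*$-orbits) $\times$ (orbital volume), I compute the orbit count via the reduced norm $\mathrm{Nrd}:\mathcal{A}_p^* \twoheadrightarrow \mathbb Q_p^*$: its kernel $\mathrm{SL}_1(\mathcal{A}_p)$ sits inside $\mathfrak{o}_{\mathcal{A}_p}^*$ (reduced-norm-one elements are integral units), $\mathrm{Nrd}(\mathfrak{o}_{\mathcal{A}_p}^*) = \mathbb Z_p^*$, and $\mathrm{Nrd}(K_p^*) = N_{K_p/\mathbb Q_p}(K_p^*)$ has $v_p$-image $f_p\mathbb Z$, so the double cosets $\mathfrak{o}_{\mathcal{A}_p}^* \backslash \mathcal{A}_p^* / K_p^*$ identify with $\mathbb Z/f_p\mathbb Z$, giving $n/e_p$ orbits. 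For the orbital volume, $\mathrm{vol}(\mathfrak{o}_{\mathcal{A}_p}^*)=1-p^{-n}$ against $\mathrm{vol}(\GL_n(\mathbb Z_p))=\prod_{i=1}^n(1-p^{-i})$ with common stabilizer volume $\mathrm{vol}(\mathfrak{o}_{K_p}^*)=1-p^{-f_p}$, so the ratio of orbital volumes to the matrix case is $1/\prod_{i=1}^{n-1}(1-p^{-i})$. Multiplying gives the correction factor $\tfrac{n}{e_p \prod_{i=1}^{n-1}(1-p^{-i})}$ per prime $p \in S$.

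The main obstacle will be careful bookkeeping of Haar measure normalizations on $\mathcal{A}_p$, on $K_p$, and on the coefficient space $\mathbb Q_p^n$ of the characteristic polynomial, so that Jacobian and discriminant factors cancel correctly between the division and matrix-algebra local densities (the disc of $K_p/\mathbb Q_p$ appears on both sides and must match). A secondary technical point is to verify that the Brauer--Manin obstruction argument of \cite{ref11} transfers without modification: the relevant obstruction group for $X$ is controlled by $K$ rather than by $\mathcal{A}$, so the nontrivial Brauer classes of $\mathcal{A}$ at primes in $S$ should not introduce new obstructions to strong approximation.
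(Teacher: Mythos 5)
Your proposal follows the same overall strategy as the paper: a quantitative local--global principle in the style of Wei--Xu (Theorem~2.2), with the archimedean density handled as in Shah, the unramified primes handled exactly as in the matrix-algebra case, and a new local computation at the primes $p\in S$ where $\mathcal{A}_p$ is a division algebra. Your identification of the obstruction group with $H^1(\mathbb{Q},\widehat{\Upsilon})$ (depending on $K$, not on $\mathcal{A}$), and your recognition that one must still check that the nontrivial characters integrate to zero against the local measures, is exactly the content of the paper's Theorem~2.2; the paper carries this out by picking a ramified prime $\ell$ of the corresponding abelian extension and using surjectivity of the reduced norm $(\mathfrak{o}_{\mathcal{A}}\otimes\mathbb{Z}_\ell)^\times\twoheadrightarrow\mathbb{Z}_\ell^\times$, which remains valid at primes in $S$.

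The one place where your argument genuinely diverges from the paper is the count of $\Delta_p^\times$-orbits in $\mathbf{X}(\mathbb{Z}_p)$ for $p\in S$. You reduce the double-coset count $\Delta_p^\times\backslash D_p^\times/K_p^\times$ to $\mathbb{Z}_p^\times\backslash\mathbb{Q}_p^\times/N_{K_p/\mathbb{Q}_p}(K_p^\times)\cong\mathbb{Z}/f_p\mathbb{Z}$ via the reduced norm, using that $\mathrm{SL}_1(\mathcal{A}_p)\subset\Delta_p^\times$ and $\mathrm{Nrd}(\Delta_p^\times)=\mathbb{Z}_p^\times$. The paper's Lemma~3.2 instead uses the filtration $D_p^\times=\bigcup_i\pi_p^i\Delta_p^\times$ and Reiner's index formula $[D_p^\times:K_p^\times\cdot\Delta_p^\times]=n/e_p$ directly. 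These are equivalent --- your valuation-theoretic argument is perhaps the cleaner way to see it --- and both yield $n/e_p=f_p$ orbits. Your ratio computation $\frac{1-p^{-n}}{\prod_{i=1}^n(1-p^{-i})}=\frac{1}{\prod_{i=1}^{n-1}(1-p^{-i})}$ then matches the paper's explicit evaluation of $\mu_p(\Delta_p^\times)=1-p^{-n}$ against $\mu_p(\mathrm{SL}_n(\mathbb{Z}_p))$. So the proposal is correct and takes essentially the paper's route, with a modestly different (and arguably more conceptual) proof of the orbit-count lemma.
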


Notation and Terminology are standard. For a central simple algebra $\mathcal{A}$ of degree $n$ over $\mathbb{Q}$, we use $G_{\mathcal A}$ to denote a linear algebraic over $\mathbb Q$ which represents the functor
$$ (( \mathbb Q - \text{algebras})) \rightarrow ((\text{groups})); \ \ E \mapsto (E\otimes_{\mathbb Q} A)^\times  $$
and $SG_{\mathcal A}$ to be a closed subgroup of $G_{\mathcal A}$ which represents the functor
$$ (( \mathbb Q - \text{algebras})) \rightarrow ((\text{groups})); \ \ E \mapsto \ker ((E\otimes_{\mathbb Q} A)^\times\xrightarrow{N} E^\times)   $$
where $N$ is the reduced norm map. One has the exact sequence of linear algebraic groups
 \begin{equation}\label{exact}  1\longrightarrow SG_{\mathcal A} \longrightarrow G_{\mathcal A} \longrightarrow \mathbb G_m \longrightarrow 1 .  \end{equation}

For a central simple algebra $\mathcal{A}$ of degree $n$ over $\mathbb{Q}$ and a monic irreducible polynomial $p(\lambda)$ of degree $n$ over $\mathbb Z$, we define a variety $X_{\mathcal A, p(\lambda)}$ over $\mathbb Q$ which represents the functor 
$$ (( \mathbb Q - \text{algebras})) \rightarrow ((\text{sets})); \ \ E \mapsto \{ x\in E\otimes_{\mathbb Q} \mathcal A: \ f_x(\lambda)=p(\lambda) \} $$ 
where $f_x(\lambda)$ is the characteristic polynomial of $x$

For a fixed maximal order $\mathfrak o_{\mathcal A}$ of $\mathcal A$, we define an integral model $\bf X$ of $X_{\mathcal A, p(\lambda)}$ over $\mathbb Z$ which represents the functor 
$$ (( \mathbb Z - \text{algebras})) \rightarrow ((\text{sets})); \ \ R \mapsto \{ x\in R\otimes_{\mathbb Z} \mathfrak o_{\mathcal A}: \ f_x(\lambda)=p(\lambda) \} . $$

Both $G_{\mathcal A}$ and $SG_{\mathcal A}$ act on $X_{\mathcal A, p(\lambda)}$ over $\mathbb Q$ by conjugation and $X_{\mathcal A, p(\lambda)}$ is a homogeneous space of $G_{\mathcal A}$ and $SG_{\mathcal A}$.  Moreover, we have
$$ {\bf X}(\mathbb Z)= \{ x\in \mathfrak o_{\mathcal A}: \ f_x(\lambda)= p(\lambda) \} . $$

We also use $\mathbb A_{\mathbb Q}$ and $\mathbb I_{\mathbb Q}$ to denote the adeles and ideles of $\mathbb Q$.

\section{Quantitive local-global principle}

In this section, we provide the quantitive local-global principle for counting integral points in $\bf X$ defined in Section 1. 

\begin{Def}\label{integral}  Suppose that $\mathcal{A}$ is  a central simple algebra of degree $n$ over $\mathbb{Q}$ satisfying $\mathcal A\otimes_{\mathbb Q} \mathbb R \cong M_n(\mathbb R)$ and $p(\lambda)$ is a monic irreducible polynomial of degree $n$ over $\mathbb Z$.  Let $X_{\mathcal A, p(\lambda)}$ be a variety  over $\mathbb Q$ and $\bf X$ be an integral model of $X_{\mathcal A, p(\lambda)}$ defined in Section 1.  Write 
$$N(\textbf{X},T)=\sharp \{ a\in\textbf{X}(\mathbb Z): \ \iota(a)=(a_{ij})\in M_n(\mathbb R), \ \sqrt{\sum\limits_{1\leq i, j\leq n}a_{ij}^{2}} \leq T\} $$
and 
$$ \textbf{X}(\mathbb{R},T)=\{ a\in\textbf{X}(\mathbb R): \ \iota(a)=(a_{ij})\in M_n(\mathbb R), \ \sqrt{\sum\limits_{1\leq i, j\leq n}a_{ij}^{2}} \leq T\}$$
for $T>0$. 
\end{Def}

The following result is an analogy of  \cite[Theorem 6.1]{ref11}.

\begin{theorem}
   Let $\mathcal{A}$ be  a central simple algebra of degree $n$ over $\mathbb{Q}$ satisfying $\mathcal A\otimes_{\mathbb Q} \Bbb R \cong M_n(\Bbb R)$ and $p(\lambda)$ be a monic irreducible polynomial of degree $n$ over $\Bbb Z$.  If $\bf X$ is an integral model of $X_{\mathcal A, p(\lambda)}$ defined in Definition Section 1 with ${\bf X}(\Bbb Z)\neq \emptyset$, then
 $$ N(\textbf{X},T)\thicksim (\prod\limits _{p \ \text{primes}}\int\limits _{\textbf{X}(\mathbb{Z}_{p})}d_{p})\cdot \int\limits_{\textbf{X}(\mathbb{R},T)}d_{\infty} $$  as $T\rightarrow \infty$,
 where $d_{p}$ and $d_{\infty}$ are the Tamagawa measures on $X_{\mathcal A, p(\lambda)}$.
 \end{theorem}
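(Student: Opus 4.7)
The strategy follows Wei--Xu \cite{ref11}: I would realise $X_{\mathcal{A},p(\lambda)}$ as a homogeneous space of $SG_{\mathcal{A}}$, decompose $\mathbf{X}(\mathbb{Z})$ into finitely many $SG_{\mathcal{A}}(\mathbb{Z})$-orbits using strong approximation with Brauer--Manin obstruction, count each orbit asymptotically by Eskin--Mozes--Shah equidistribution, and match the total constant with the product of local Tamagawa integrals. Concretely, pick a base point $a_0\in\mathbf{X}(\mathbb{Z})$ (nonempty by hypothesis); irreducibility of $p(\lambda)$ and the double centralizer theorem show that the $SG_{\mathcal{A}}$-stabilizer of $a_0$ under conjugation is the anisotropic norm-one torus
$$H=\ker\bigl(N\colon\mathrm{Res}_{K/\mathbb{Q}}\mathbb{G}_m\longrightarrow\mathbb{G}_m\bigr),$$
where $K=\mathbb{Q}[\lambda]/(p(\lambda))$. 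Since $\mathcal{A}\otimes_{\mathbb{Q}}\mathbb{R}\cong M_n(\mathbb{R})$, the group $SG_{\mathcal{A}}$ is a simply connected semisimple $\mathbb{Q}$-form of $\mathrm{SL}_n$ with no compact archimedean factors, so strong approximation away from infinity applies.

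Using the resulting adelic description of $\mathbf{X}(\mathbb{Z})$ together with the Brauer--Manin analysis of \cite[Theorem 6.1]{ref11}, I would produce a finite decomposition
$$\mathbf{X}(\mathbb{Z})=\bigsqcup_{j=1}^{m}SG_{\mathcal{A}}(\mathbb{Z})\cdot a_j$$
whose orbit count together with the finite-adelic Tamagawa weights assembles into $\prod_{p}\int_{\mathbf{X}(\mathbb{Z}_p)}d_p$. The hypothesis that each $\mathcal{A}_p$ is either a matrix algebra or a division algebra is what makes the local orbit picture tractable: at matrix-algebra primes, $SG_{\mathcal{A}}(\mathbb{Q}_p)$ acts transitively on $\mathbf{X}(\mathbb{Q}_p)$, while at division-algebra primes $p\in S$ the irreducibility of $p(\lambda)$ over $\mathbb{Q}_p$ still yields a single local orbit and a local Tamagawa integral evaluating to the explicit factor $n/(e_p\prod_{i=1}^{n-1}(1-p^{-i}))$ appearing in Theorem 1.1.

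For each orbit $SG_{\mathcal{A}}(\mathbb{Z})\cdot a_j$, the stabilizer $H_{a_j}$ is a $\mathbb{Q}$-conjugate of the anisotropic torus $H$, of real rank $r_1+r_2-1$. The Eskin--Mozes--Shah equidistribution theorem, which rests on Ratner's theorem for unipotent flows on $SG_{\mathcal{A}}(\mathbb{R})/SG_{\mathcal{A}}(\mathbb{Z})$, then yields
$$\#\{a\in SG_{\mathcal{A}}(\mathbb{Z})\cdot a_j:\iota(a)\in\mathbf{X}(\mathbb{R},T)\}\sim c_j\int_{\mathbf{X}(\mathbb{R},T)}d_\infty,$$
where $c_j$ is the ratio of Haar covolumes of $SG_{\mathcal{A}}(\mathbb{Z})$ and $H_{a_j}(\mathbb{Z})$ in their real points. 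Summing over $j$ and using Siegel's mass formula to identify $\sum_j c_j$ with the product of local Tamagawa integrals completes the proof. The principal obstacle is the middle step: verifying that in the presence of division-algebra primes the Brauer--Manin obstruction is still the only obstruction to integral points of $\mathbf{X}$, and that the weights attached to each orbit by this decomposition coincide on the nose with the local Tamagawa volumes $\int_{\mathbf{X}(\mathbb{Z}_p)}d_p$. Once this matching is in place, the passage from Eskin--Mozes--Shah and Siegel's mass formula to the stated asymptotic is routine bookkeeping.
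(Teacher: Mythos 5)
Your proposal follows the right general framework (homogeneous space, strong approximation with Brauer--Manin, Eskin--Mozes--Shah equidistribution), which is indeed the Wei--Xu strategy that the paper builds on. However, there are two genuine problems.

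First, you import hypotheses from Theorem 1.1 that do not appear in the statement being proven. Theorem 2.1 asks only that $\mathcal A\otimes_{\mathbb Q}\mathbb R\cong M_n(\mathbb R)$, $p(\lambda)$ be monic irreducible, and $\mathbf{X}(\mathbb Z)\neq\emptyset$; it does not assume that each $\mathcal A_p$ is a matrix algebra or a division algebra, nor that $\mathbb Z[\lambda]/(p(\lambda))$ is integrally closed. Those hypotheses enter only in Section 3 and 4 when computing the local integrals explicitly. Your claim that "at division-algebra primes $p\in S$ the irreducibility of $p(\lambda)$ over $\mathbb{Q}_p$ still yields a single local orbit" is also false: by the paper's Lemma 3.2 the number of orbits there is $n/e_p$, which need not be $1$. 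None of this local orbit-counting belongs in the proof of Theorem 2.1.

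Second, and more seriously, you describe the matching of $\sum_j c_j$ with $\prod_p\int_{\mathbf{X}(\mathbb Z_p)}d_p$ as "routine bookkeeping", and you locate the "principal obstacle" in verifying that Brauer--Manin is the only obstruction. That is a misdiagnosis. The ready-made inputs (Shah's Theorem 2.4 and Wei--Xu's Theorem 4.3) already give the identity
$$N(\mathbf{X},T)\sim\sum_{\chi}\Bigl(\prod_{p}\int_{\mathbf{X}(\mathbb Z_p)}\chi\,d_p\Bigr)\cdot\int_{\mathbf{X}(\mathbb R,T)}\chi\,d_\infty$$
with $\chi$ running over $\Br(X_{\mathcal A,p(\lambda)})/\Br(\mathbb Q)\cong H^1(\mathbb Q,\widehat{\Upsilon})$. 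The nontrivial new content of Theorem 2.1 is precisely that every term with $\chi\neq 1$ vanishes, so that the sum collapses to the Tamagawa product. The paper proves this by choosing a prime $l$ ramified in the abelian extension cut out by $\ker\chi$, finding $u_l\in\mathbb Z_l^\times$ with $\chi(\rho(u_l))\neq 1$, lifting $u_l$ to a unit $\alpha\in(\mathfrak o_{\mathcal A}\otimes_{\mathbb Z}\mathbb Z_l)^\times$ by surjectivity of the reduced norm on local unit groups, and then using the commutativity of the cup-product and Brauer--Manin pairings to show that the action of $\alpha$ on $\mathbf{X}(\mathbb Z_l)$ multiplies $\int_{\mathbf{X}(\mathbb Z_l)}\chi\,d_l$ by $\chi(\rho(u_l))^{-1}\neq 1$, forcing that integral to be zero. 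Without this vanishing argument there is no reason for the weighted orbit sum to equal the full Tamagawa product, and your proposal leaves exactly this step unaddressed.
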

 \begin{proof} Let $K=\Bbb Q[\lambda]/(p(\lambda))$ and $\Upsilon=R_{K/\Bbb Q}^{1}(\Bbb G_m) $ be the norm one torus over $\Bbb Q$. 
 Since 
 $X_{\mathcal A, p(\lambda)} \cong SG_{\mathcal A}/\Upsilon$,  there is a canonical homomorphism 
 $$ \delta: \ H^1(\Bbb Q, \widehat{\Upsilon}) \cong \ker (\widehat{\Gal(\bar{\Bbb Q}/\Bbb Q)} \rightarrow \widehat{\Gal(\bar{\Bbb Q}/K)}) \longrightarrow \Br(X_{\mathcal A, p(\lambda)}) $$
which induces an isomorphism 
$$ \label{br} H^1(\Bbb Q, \widehat{\Upsilon}) \cong \ker (\widehat{\Gal(\bar{\Bbb Q}/\Bbb Q)} \rightarrow \widehat{\Gal(\bar{\Bbb Q}/K)}) \cong  \Br(X_{\mathcal A, p(\lambda)})/\Br(\Bbb Q) $$ by  \cite[Proposition 2.9 and Proposition 2.10]{ref5} and the proof of \cite[Theorem 6.1]{ref11}.  If we fix an identification of
  $\Bbb Q/\Bbb Z$ with all roots of unity in $\Bbb C^\times$, then all characters  \begin{equation} \label{cha} \chi \in \ker (\widehat{\Gal(\bar{\Bbb Q}/\Bbb Q)} \rightarrow \widehat{\Gal(\bar{\Bbb Q}/K)}) \end{equation} are locally constant functions over with $\textbf{X}(\Bbb Z_p)$ for all primes $p$ and $\textbf{X}(\Bbb R)$ by combining the map $\delta$ with the Brauer-Manin pairing. 
By \cite[Theorem 2.4]{ref9} and \cite[Theorem 4.3]{ref11}, one has
$$ N(\textbf{X},T)\thicksim  \sum_{\chi} (\prod_{p \ \text{primes}} N_p (\textbf{X}, \chi)) \cdot  N_{\Bbb R}(\textbf{X}, \chi, T) $$
as $T\rightarrow \infty$, where 
$$ N_p (\textbf{X}, \chi)= \int_{\textbf{X}(\Bbb Z_p)} \chi \cdot d_p \ \ \ \text{and} \ \ \ N_{\Bbb R}(\textbf{X}, \chi, T)=\int_{\textbf{X}(\mathbb{R},T)} \chi \cdot d_{\infty} $$ 
and $\chi$ runs over all characters in (\ref{cha}).  

 For any $\chi \neq 1$,  there is a finite non-trivial abelian extension $F/\Bbb Q$ such that $\ker(\chi)=\Gal(\bar{\Bbb Q}/F)$. By \cite[Chapter V, \S 4, Corollary 1]{Weil}, there is a prime $l$ ramified in $F/\Bbb Q$. This implies that there is $u_l\in\Bbb Z_l^\times$ such that $\chi(\rho(u_l))\neq 1$ where $\rho$ is 
 \[ \begin{CD} 
 \Bbb Q_l^\times @>\rho>> \Gal (\Bbb Q_l^{ab}/\Bbb Q_l) \\
@VVV @VVV \\
\Bbb I_{\Bbb Q}  @>>\rho> \Gal(\Bbb Q^{ab} /\Bbb Q) \end{CD} \]
 is the Artin map.  By \cite[Chapter V, Corollary1.2]{Neu}, there is $\alpha\in (\mathfrak o_{\mathcal A}\otimes_{\Bbb Z} \Bbb Z_l)^\times$ such that $N(\alpha)=u_l$. 
Since $\alpha$ acts on $ \textbf{X}(\Bbb Z_l)$, one obtains
$$ \int_{\textbf{X}(\Bbb Z_l)}\chi(y) d_l(y) = \int_{\textbf{X}(\Bbb Z_l)}\chi(\alpha\circ y) d_l(y) $$
 by using this action. 

By Galois cohomology with $$H^1(\Bbb Q_p, R_{K/\Bbb Q}(\Bbb G_m))=H^1(\Bbb Q_p, \Bbb G_m)=\{1\}, $$ one has 
the following commutative diagram
        
        \[
          \begin{CD}
                   @. 1 @. 1\\
                   @. @VVV @VVV \\
            1 @>>> \Upsilon(\Bbb Q_p) @>>> SG_{\mathcal A}(\Bbb Q_p) @>>>  X_{\mathcal A, p(\lambda)}(\Bbb Q_p)  @>{ev}>> H^1(\Bbb Q_p, \Upsilon)\\
            @. @V  V V @V V V @| @.\\
            1 @>>> (K\otimes_\Q \Bbb Q_p)^\times @>>> G_{\mathcal{A}}(\Bbb Q_p) @>>> X_{\mathcal A, p(\lambda)}(\Bbb Q_p) @>>> 1\\
            @. @V  V N_{K/\Q} V @V V N V @.\\
            1 @>>> \Bbb Q_p^\times @= \Bbb Q_p^\times \\
            @. @V V V @V V V @.\\
            @. H^1(\Bbb Q_p,\Upsilon) @. 1 \\
            @. @ VVV @. \\
             @. 1 
          \end{CD}
        \]
where $N$ is the reduced norm map. For any $y\in X_{\mathcal A, p(\lambda)}(\Bbb Q_p)$, there is $g\in G_{\mathcal A}(\Bbb Q_p)$ such that $y = g x g^{-1}$. Then 
\begin{equation} \label{ev} ev(y) = N(g) \cdot  N_{K/\Bbb Q} (K\otimes_\Q \Bbb Q_p)^\times \in H^1(\Bbb Q_p,\Upsilon)=\Bbb Q_p^\times/N_{K/\Bbb Q} (K\otimes_\Q \Bbb Q_p)^\times \end{equation}
 by tracing the above diagram.  Since the cup products and the Brauer-Manin pairing satisfy the following commutative diagram of pairings 
   \begin{equation}\label{pairing}
          \begin{CD}
           X_{\mathcal A, p(\lambda)}(\Bbb Q_p)  @.  \times  \ \ \ \Br(X_{\mathcal A, p(\lambda)})  @>>> \Br(\Bbb Q_p) \\
            @V{ev} V  V   @ AA \delta A  @ VV{id}V \\
       H^1(\Bbb Q_p,  \Upsilon) \ @. \times  \ H^1(\Bbb Q_p, \widehat{\Upsilon})      @>{\cup}>> \Br(\Bbb Q_p) \\
      @AAA   @ VVV @ VV{-id}V \\
       H^0 (\Bbb Q_p,  \Bbb G_m) \ @. \times \ \ H^2(\Bbb Q_p, \Bbb Z)      @>\cup>> \Br(\Bbb Q_p)
   \end{CD} 
        \end{equation}
by  \cite[Proposition 2.9]{ref5} and \cite[Chapter I, (1.4.7) Proposition]{ref6}, one obtains
  $$ \int_{\textbf{X}(\Bbb Z_l)}\chi(\alpha\circ y) d_l(y) = \chi(\rho(u_l))^{-1}  \int_{\textbf{X}(\Bbb Z_l)}\chi(y) d_l(y) $$ by the formula (\ref{ev}) and  \cite[Chapter VII, (7.2.12) Proposition]{ref6}. Therefore
   $$\int_{{\bf {X}}(\Bbb Z_l)} \chi \cdot d_l=0$$ and the result follows.
        \end{proof}

\section{Local computation}

In this section, we determine the orbits of $\text{Aut}(\mathcal{O}_{\mathcal{A}_{p}})$ under the conjugate action over $\textbf{X}(\mathbb{Z}_{p})$ where $\bf X$ is an integral model of $X_{\mathcal A, p(\lambda)}$ defined in Section 1 under the assumption of $\mathcal{A}_{p}$ is a matrix algebra or a division algebra for all primes $p$, the ring $\mathbb{Z}[\lambda]/(p(\lambda))$ is integrally closed and ${\bf X}(\Bbb Z)\neq \emptyset$.\\

When $\mathcal{A}_{p}$ is a matrix algebra, such a problem was already studied in \cite{ref3}.\\

\begin{lemma}
    Let $R$ be a PID and $f(x)$ be a monic polynomial over $R$ of degree $n$ with $(f(x),f'(x))=1$. If $GL_{n}(R)$ acts on the set of $n\times n$ matrices $M_{n}(R)$ over $R$ by conjugation, then there is one to one correspondence\begin{center}
        $\{A\in M_{n}(R):det(xI_{n}-A)=f(x)\}/GL_{n}(R)\leftrightarrow \{I\ ideals\ in\ R[x]/(f(x)):rank_{R}(I)=n\}/\sim $
    \end{center}
    where $I\sim J$ for two ideals means that there are $\alpha$ and $\beta$ which are not zero divisors such that $\alpha I=\beta J$.
\end{lemma}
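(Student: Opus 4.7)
Set $S = R[x]/(f(x))$, $K = \mathrm{Frac}(R)$, and $L = K\otimes_R S = K[x]/(f(x))$. Since $(f,f')=1$ in $R[x]$, $f$ is separable over $K$, so $L$ is an \'etale $K$-algebra and splits as a finite product of fields $L = \prod_i K_i$. The plan is to send a conjugacy class $[A]$ to the isomorphism class of $R^n$ with the $S$-module structure in which $x$ acts as $A$, and then to realize this $S$-module as a fractional $S$-ideal in $L$; the inverse sends an ideal $I$ to the matrix of multiplication-by-$x$ in some $R$-basis.

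First I would check that the minimal polynomial of $A$ equals $f$: Cayley--Hamilton gives $f(A)=0$, and separability of $f$ rules out any proper monic divisor sharing all roots of $f$. Hence $R^n$ is a faithful $S$-module. Tensoring with $K$, the module $K^n$ becomes a faithful $L$-module of $K$-dimension $n=\dim_K L$; decomposing $K^n = \prod_i V_i$ with $V_i$ a $K_i$-vector space, faithfulness forces each $V_i\neq 0$ and the dimension equality then forces $V_i \cong K_i$. So $K^n \cong L$ as $L$-modules. This identification is the one step that genuinely uses the separability hypothesis $(f,f')=1$, and I expect it to be the most delicate point; what follows is bookkeeping.

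Composing $R^n\hookrightarrow K^n\cong L$ embeds $R^n$ as a full fractional $S$-ideal $I\subset L$. Two identifications $K^n\cong L$ of $L$-modules differ by an automorphism of $L$ as an $L$-module, i.e.\ by multiplication by an element of $L^\times$, and any such unit can be written as $\alpha/\beta$ with $\alpha,\beta\in S$ non-zero-divisors; so the class $[I]$ for the equivalence of the lemma is well defined. A change of $R$-basis by $P\in GL_n(R)$ replaces $A$ by $PAP^{-1}$ but leaves the underlying $S$-module unchanged, hence leaves $[I]$ unchanged. This yields the forward map on conjugacy classes.

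For the inverse, a full fractional $S$-ideal $I\subset L$ is finitely generated and torsion-free over the PID $R$, hence free, and the identity $K\otimes_R I = L$ forces its $R$-rank to be $n$. Choosing an $R$-basis presents the action of $x\in S$ as a matrix $A\in M_n(R)$ whose characteristic polynomial agrees with that of the $K$-linear endomorphism $\mu_x$ of $L$, namely $f$. Different bases give $GL_n(R)$-conjugate matrices, and an equivalence $\alpha I = \beta J$ produces an $S$-module isomorphism $I\to J$ by multiplication by $\beta/\alpha \in L^\times$, hence a $GL_n(R)$-conjugacy of the associated matrices. The two constructions are mutually inverse, giving the stated bijection.
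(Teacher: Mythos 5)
The paper gives no proof of this lemma; it simply cites \cite[Lemma~6.4]{ref11} (Wei--Xu). You have supplied a self-contained argument, which is essentially the standard Latimer--MacDuffee construction in the setting of a general PID and a separable (not necessarily irreducible) monic polynomial: attach to a matrix $A$ the $S$-module $R^n$ with $x$ acting as $A$, embed it into $L=K\otimes_R S$, and invert by reading off the matrix of multiplication by $x$ in an $R$-basis. The logic is correct. The one genuinely delicate point, which you identify, is that a faithful $L$-module of $K$-dimension $n=\dim_K L$ must be $L$ itself; your argument via the decomposition of $L$ into a product of fields and counting dimensions is right, and it is exactly here that $(f,f')=1$ (giving separability of $f$ over $K$, hence minimal polynomial $=$ characteristic polynomial and faithfulness of $R^n$ over $S$) is used.

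Two small points worth tightening. First, the lemma is phrased in terms of honest ideals $I\subseteq S=R[x]/(f)$ of $R$-rank $n$, while your forward map lands in full fractional $S$-ideals in $L$; you should add the one-line remark that every such fractional ideal is $\sim$-equivalent to an honest ideal (clear denominators by a suitable $r\in R\setminus\{0\}$), so the target set in the lemma and your set of fractional-ideal classes coincide. Second, a trivial slip: if $\alpha I=\beta J$, the $S$-module isomorphism $I\to J$ is multiplication by $\alpha/\beta$, not $\beta/\alpha$. Neither affects the substance.

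Since the paper merely defers to the reference, your write-up is a net gain: it makes explicit which hypothesis (separability over $K$, a consequence of $(f,f')=1$ in $R[x]$) is actually doing the work, and it shows the argument does not require $f$ to be irreducible.
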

\begin{proof}
    See\cite[Lemma6.4]{ref11}.
\end{proof}

When $\mathcal{A}_{p}$ is a division algebra, the number of the orbits is determined by the ramification as the following lemma:

\begin{lemma}
    Let $D_p$ be a central division algebra over $\Bbb Q_p$ of degree $n$ and $\Delta_p$ be the unique maximal order of $D_p$. Suppose that $p(\lambda)$ is a characteristic polynomial of an element in $\Delta_p$ over $\Bbb Z_p$ with $(p(\lambda), p'(\lambda))=1$ and 
$$ \mathbf X(\Bbb Z_p) =  \{ x\in \Delta_p: \ f_x(\lambda)=p(\lambda) \}   $$ with the action of $\Delta_p^\times$ by conjugation.
Then $p(\lambda)$ is irreducible over $\Bbb Z_p$ and the number of orbits by $\Delta_p^\times$ action inside $\mathbf X(\Bbb Z_p)$ is $e_p^{-1} n$ where $e_p$ is ramification index of extension $\Bbb Q_p[\lambda]/(p(\lambda))$ over $\Bbb Q_p$. 
\end{lemma}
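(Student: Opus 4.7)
The plan is to re-express $\mathbf X(\Bbb Z_p)$ as the set of $\Bbb Q_p$-algebra embeddings $K \hookrightarrow D_p$ for $K = \Bbb Q_p[\lambda]/(p(\lambda))$, and then count $\Delta_p^\times$-orbits via a double-coset computation using Skolem--Noether and the valuation on $D_p$.

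First I would establish irreducibility of $p(\lambda)$. For any $x \in \mathbf X(\Bbb Z_p)$, the subring $\Bbb Q_p[x] \subseteq D_p$ is a commutative integral domain finite dimensional over $\Bbb Q_p$, hence a field, since $D_p$ is a division algebra. In particular the minimal polynomial $m_x$ is irreducible, and the reduced characteristic polynomial factors as $f_x = m_x^{n/d}$ with $d = [\Bbb Q_p(x):\Bbb Q_p]$. Since $(p,p')=1$ forces $p = f_x$ to be squarefree, we must have $n/d = 1$, so $p = m_x$ is irreducible of degree $n$ and $K$ is a field.

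Next I would identify $\mathbf X(\Bbb Z_p)$ with the set of $\Bbb Q_p$-algebra embeddings $K \hookrightarrow D_p$. Each $x \in \mathbf X(\Bbb Z_p)$ gives the embedding $\phi_x: \lambda \mapsto x$. Conversely, for any embedding $\phi$ the element $\phi(\lambda) \in D_p$ has reduced characteristic polynomial $p(\lambda)$ and is integral over $\Bbb Z_p$; since $\Delta_p$ is the unique maximal order of $D_p$ and therefore equals the set of all integral elements of $D_p$, one has $\phi(\lambda) \in \Delta_p$ automatically. The bijection is $\Delta_p^\times$-equivariant for the conjugation action. By Skolem--Noether, $D_p^\times$ acts transitively on embeddings, and since $K$ is a maximal subfield of $D_p$ (both have degree $n$), the double centralizer theorem identifies the stabilizer of any $\phi$ with $\phi(K)^\times \cong K^\times$. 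Thus $\Delta_p^\times$-orbits on $\mathbf X(\Bbb Z_p)$ are in bijection with the double coset space $\Delta_p^\times \backslash D_p^\times / K^\times$.

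Finally I would count this space via the normalized valuation $v_D$ on $D_p^\times$ with $v_D(\Pi_D) = 1$ for a uniformizer $\Pi_D$ of $\Delta_p$. Since $\Delta_p^\times = \ker v_D$ and $v_D$ surjects onto $\Bbb Z$, the quotient $\Delta_p^\times \backslash D_p^\times$ is $\Bbb Z$, so the number of double cosets equals $[\Bbb Z : v_D(K^\times)]$. The restriction of $v_D$ to $\Bbb Q_p^\times$ equals $n \cdot v_p$, and by uniqueness of extension of valuations in the local setting one obtains $v_D|_{K^\times} = (n/e_p)\, v_K$ where $v_K$ is the normalized valuation on $K$; consequently $v_D(K^\times) = (n/e_p)\Bbb Z$ and the number of orbits is $n/e_p$. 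The most delicate step should be this valuation normalization, which requires the ramification index of $D_p/\Bbb Q_p$ being $n$ together with uniqueness of extended valuations to pin down $v_D|_{K^\times}$ correctly; the remaining ingredients (irreducibility, the embedding bijection, Skolem--Noether) are standard once the framework is in place.
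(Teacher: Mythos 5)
Your proof is correct and follows essentially the same route as the paper: both establish irreducibility from the fact that $D_p$ has no zero divisors, use Skolem--Noether to get transitivity of $D_p^\times$ on $\mathbf X(\Bbb Z_p)$, identify the stabilizer with $K^\times$, and reduce the orbit count to the index $[D_p^\times : K^\times \Delta_p^\times]$. The only difference is that where you compute this index directly from the valuation normalization $v_D|_{K^\times} = (n/e_p)\,v_K$, the paper simply cites \cite[(14.3) Theorem]{Rein} for the same formula.
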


\begin{proof} Let $x_0\in \Lambda$ such that $f_{x_0}(\lambda)=p(\lambda)$. Since $(p(\lambda), p'(\lambda))=1$, one concludes that $p(\lambda)$ is also the minimal polynomial of $x_0$. Since there is no zero divisors in $D_p$, one obtains that $p(\lambda)$ is irreducible over $\Bbb Z_p$.  For any $x\in  \mathbf X(\Bbb Z_p) $, there  is $g\in D_p$ such that $x=g \cdot x_0 \cdot g^{-1}$ by Skolem-Noether Theorem (see \cite[(7.21) Theorem] {Rein}). This implies that $D_p^\times$ acts on $\mathbf X(\Bbb Z_p)$ transitively by conjugation. 
    By \cite[(13.2) Theorem]{Rein}, one has  $$D_p^\times = \bigcup_{i=-\infty}^{\infty} \pi_p^i  \Delta_p^\times$$ where $\pi_p$ is a prime element of $\Delta_p$.  Since the stabilizer of $x_0$ by action of $D_p^\times$ is $$ \Bbb Q_p(x_0)^\times  \cong (\Bbb Q_p[\lambda]/(p(\lambda)))^\times $$ where $\Bbb Q_p[\lambda]/(p(\lambda))$ is a finite extension of degree $n$ over $\Bbb Q_p$ with the ramification index $e_p$, the number of orbits by $\Delta_p^\times$ action inside $\mathbf X(\Bbb Z_p)$ is equal to $$[D_p^\times: \Bbb Q_p(x_0)^\times \cdot \Delta_p^\times]= \frac{n}{e_p} $$ by \cite[(14.3) Theorem]{Rein} as desired.
    \end{proof}

For general central simple algebra $\mathcal A_p$ of degree $n$ over $\mathbb Q_p$. Then $\mathcal A_p \cong M_{m_p} (D_p)$ for some positive integer $m_p$ where $D_p$ is a central division algebra of degree $d_p$ over $\mathbb{Q}_p$. Let $\Delta_p$ be the unique maximal order of $D_p$. Then $ M_{m_p} (\Delta_p)$ is a maximal order of $\mathcal A_p$.\\

We set
$$\mathcal{S}_{x_{0}}:=\{g\in GL_{m_p} (D_p):g^{-1}\cdot x_{0}\cdot g\in M_{m_p} (\Delta_p)\}$$
where $x_{0}\in\textbf{X}(\mathbb{Z}_{p})$.\\

The set $\mathcal{S}_{x_{0}}$ is crucial for computing the number of the orbits of $\text{Aut}(\mathcal{O}_{\mathcal{A}_{p}})$ under the conjugate action over $\textbf{X}(\mathbb{Z}_{p})$. An important observation is that the set
$$D_{p}^{\times}\cdot GL_{m_p}(\Delta_{p})$$
is contained in $\mathcal{S}_{x_{0}}$, since 
$$xyx^{-1}\in GL_{m_p} (\Delta_p) \ \ \ \text{ for } \ \forall x\in D_p^\times \ \text{and} \   \forall y\in M_{m_p} (\Delta_p).$$
In particular, $D_{p}^{\times}\cdot GL_{m_p}(\Delta_{p})$ is in fact a subgroup of $GL_{m_p}(D_p)$.\\

We set
$$G:=D_{p}^{\times}\cdot GL_{m_p}(\Delta_{p}).$$

\begin{proposition}
    The number of the orbits of $G$ under the conjugate action over $\textbf{X}(\mathbb{Z}_{p})$ is equal to one if and only if 
    $$\mathcal{S}_{x_{0}}=\textrm{stab}_{G}(x_{0})\cdot G.$$
\end{proposition}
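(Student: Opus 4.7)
The plan is to reinterpret the orbit-counting problem on $\mathbf{X}(\mathbb{Z}_p)$ as a double-coset problem inside $GL_{m_p}(D_p)$, using Skolem--Noether to parametrise $\mathbf{X}(\mathbb{Z}_p)$ by cosets.

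First, I would observe that by the Skolem--Noether theorem, $GL_{m_p}(D_p) = \mathcal{A}_p^\times$ acts transitively by conjugation on the $\mathbb{Q}_p$-points $X_{\mathcal{A}_p, p(\lambda)}(\mathbb{Q}_p)$: the hypothesis $(p(\lambda), p'(\lambda)) = 1$ forces $p(\lambda)$ to be the minimal polynomial of every element with characteristic polynomial $p(\lambda)$, so any two such elements generate isomorphic simple subalgebras of $\mathcal{A}_p$. In particular, every $x \in \mathbf{X}(\mathbb{Z}_p) \subseteq X_{\mathcal{A}_p, p(\lambda)}(\mathbb{Q}_p)$ can be written $x = g^{-1} x_0 g$ with $g \in GL_{m_p}(D_p)$, and the condition $x \in M_{m_p}(\Delta_p)$ is by definition exactly $g \in \mathcal{S}_{x_0}$.

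Next, set $H := \mathrm{Stab}_{GL_{m_p}(D_p)}(x_0)$, which is the commutant of $x_0$ and is isomorphic to $\mathbb{Q}_p[x_0]^\times \cong (\mathbb{Q}_p[\lambda]/(p(\lambda)))^\times$. The orbit map $\pi \colon g \mapsto g^{-1} x_0 g$ then factors through a bijection
$$ H \backslash \mathcal{S}_{x_0} \xrightarrow{\ \sim\ } \mathbf{X}(\mathbb{Z}_p). $$
Under this bijection, conjugation on $\mathbf{X}(\mathbb{Z}_p)$ by $k \in G$ corresponds to right multiplication by $k^{-1}$ on $H \backslash \mathcal{S}_{x_0}$, because
$$ k\,(g^{-1} x_0 g)\,k^{-1} = (g k^{-1})^{-1} x_0 (g k^{-1}). $$
For this to be well-defined I must check that $\mathcal{S}_{x_0}$ is stable under left multiplication by $H$ (immediate, since $H$ commutes with $x_0$) and under right multiplication by $G$; the latter reduces to the same observation already used in the excerpt, namely $G \cdot M_{m_p}(\Delta_p) \cdot G^{-1} \subseteq M_{m_p}(\Delta_p)$ (scalars from $D_p^\times$ stabilise $\Delta_p$ because $\Delta_p$ is the unique maximal order, and $GL_{m_p}(\Delta_p)$ obviously preserves $M_{m_p}(\Delta_p)$).

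Consequently the set of $G$-orbits in $\mathbf{X}(\mathbb{Z}_p)$ is in natural bijection with the double-coset space $H \backslash \mathcal{S}_{x_0} / G$, so there is a single orbit if and only if $\mathcal{S}_{x_0} = H \cdot G$, which is the stated equivalence (reading $\mathrm{stab}(x_0)$ as the stabiliser of $x_0$ in the ambient group $GL_{m_p}(D_p)$). I do not foresee a serious obstacle; the only care required is in the bookkeeping of left versus right cosets and in checking the two stability properties of $\mathcal{S}_{x_0}$ above, which is the most technical step of the argument.
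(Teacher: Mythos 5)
Your proof is correct and rests on essentially the same orbit--stabiliser reasoning as the paper's; you merely repackage the two implications the paper checks by hand into a single clean double-coset bijection $H\backslash\mathcal{S}_{x_0}/G \;\leftrightarrow\; \mathbf{X}(\mathbb{Z}_p)/G$. You also rightly flag that $\textrm{stab}_{G}(x_0)$ must be read as the centraliser of $x_0$ in the ambient group $GL_{m_p}(D_p)$ rather than its intersection with $G$ (a slight abuse of notation in the paper), which the paper's subsequent identification of this stabiliser with $(\mathbb{Q}_p\otimes\mathbb{Q}[\lambda]/(p(\lambda)))^\times$ in equation (3.1) confirms.
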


\begin{proof}
    If the number of the orbits of $G$ under the conjugate action over $\textbf{X}(\mathbb{Z}_{p})$ is one, for any $g\in\mathcal{S}_{x_0}$, set $$x=g^{-1}\cdot x_{0}\cdot g\in \textbf{X}(\mathbb{Z}_{p}).$$
    then $x=h^{-1}\cdot x_{0}\cdot h$ for some $h\in G$.\\
    
    One gets
    $$x_{0}=hg^{-1}\cdot x_{0}\cdot gh^{-1}\Rightarrow gh^{-1}\in\textrm{stab}_{G}(x_{0})$$
    since $(p(\lambda), p'(\lambda))=1$. That means $g\in\textrm{stab}_{G}(x_{0})\cdot G$. 
    
    Then one obtains
    $$\mathcal{S}_{x_{0}}=\textrm{stab}_{G}(x_{0})\cdot G,$$
    as desired by 
    $$\textrm{stab}_{G}(x_{0})\cdot G\subseteq \mathcal{S}_{x_{0}}$$ which is obvious since $G\subseteq \mathcal{S}_{x_{0}}$.\\

Conversely, if 
$$\mathcal{S}_{x_{0}}=\textrm{stab}_{G}(x_{0})\cdot G.$$

For any $x\in\textbf{X}(\mathbb{Z}_{p})$, there is $g\in GL_{m_{p}}(D_{p})$ such that $x=g^{-1}\cdot x_{0}\cdot g$. Then
$$x\in\textbf{X}(\mathbb{Z}_{p})\subseteq M_{m_{p}}(\Delta_{p})\Rightarrow g\in\mathcal{S}_{x_{0}}.$$

Write $g=\xi \cdot h$ for some $\xi\in\textrm{stab}_{G}(x_{0})$ and $h\in G$. One has
$$x=g^{-1}\cdot x_{0}\cdot g=h^{-1}\xi^{-1}\cdot x_{0}\cdot \xi h=h^{-1}\cdot x_{0}\cdot h$$
, i.e. the operation of $G$ on $\textbf{X}(\mathbb{Z}_{p})$ is transitive.

\end{proof}

The stabilizer of $x_0$ by action of $GL_{m_p} (D_p)$ can be identified with $(\mathbb{Q}_{p}\otimes \mathbb{Q}[\lambda]/(p(\lambda)))^{\times}$.\\

When $\mathcal{A}$ is a matrix algebra or a division algebra, one gets
\begin{equation}
    \mathcal{S}_{x_{0}}=(\mathbb{Q}_{p}\otimes \mathbb{Q}[\lambda]/(p(\lambda)))^{\times}\cdot (D_{p}^{\times}\cdot GL_{m_p}(\Delta_{p}))
\end{equation}by Lemma 3.1, Lemma 3.2 and Proposition3.3.

\begin{remark}
    It is natural to expect that (3.1) holds for arbitrary central simple algebras. However, the following example says that it fails even for $m_{p}=2$.
\end{remark}

 \begin{example}
    Let $D_{p}$ be a quaternion algebra over $\mathbb{Q}_{p}$, $\mathcal{A}_{p}=M_{2}(D_{p})$ and $p(\lambda)=\lambda^{4}-ap^{2}$, where $p$ is an odd prime and $a$ is a square-free integer such that
    $$(\frac{a}{p})=-1.$$
    Then $p(\lambda)$ splits into two Eisenstein polynomials over an unramified quadratic splitting field of $\mathcal{A}_{p}$ i.e. $L=\mathbb{Q}_{p}(\sqrt{a})$. $D_{p}$ can be identified with a subring of $M_{2}(L)$, as:
    $$D_{p}=\{\begin{pmatrix}
        u & p\cdot\sigma(v) \\ v& \sigma(u)
    \end{pmatrix}:u,v\in L\}$$
    where $\sigma$ is the Frobenius in $\Gal(L/\mathbb{Q}_{p})$.
    Set
    $$x_{0}=\begin{pmatrix}
        0 & \sqrt{a} \\ p& 0
    \end{pmatrix}\in M_{2}(D_{p})=\mathcal{A}_{p},g=\begin{pmatrix}
        1 &  \\ & \pi
    \end{pmatrix}\in GL_{2}(D_{p})$$
    where $$\pi=\begin{pmatrix}
        0 & p \\ 1& 0
    \end{pmatrix}\ is\ a\ prime\ of\ D_{p}.$$
One has $g\in S_{x_{0}}$, since
$$g^{-1}\cdot x_{0}\cdot g=\begin{pmatrix}
    1 &  \\ & \pi^{-1}
\end{pmatrix}\cdot \begin{pmatrix}
    0 & \sqrt{a} \\ p& 0
\end{pmatrix}\cdot\begin{pmatrix}
    1 &  \\ & \pi
\end{pmatrix}=\begin{pmatrix}
    0 & \sqrt{a}\cdot\pi \\ \pi& 0
\end{pmatrix}\in M_{2}(\Delta_{p}).$$
While $$g\notin (\mathbb{Q}_{p}[\lambda]/(p(\lambda)))^{\times}\cdot (D_{p}^{\times}\cdot GL_{2}(\Delta_{p})),$$
since $v_{p}(\det (g))=1$ and the valuation of the determinant of the elements which belong to the right set is always even.

 \end{example}

We also give a special case for determining the set
$$ \mathcal{S}_{\Delta_{p}}:=\{ g\in GL_{m_p}(D_p) :  g M_{m_p} (\Delta_p) g^{-1}  = M_{m_p}(\Delta_p)\}.$$
 \begin{example}
    For general central simple algebra $\mathcal A_p = M_{m_p} (D_p)$ of degree $n$, where $D_{p}$ is a division algebra over $\mathbb{Q}_{p}$ of degree $r_{p}$. We can identify $\mathcal{A}_{p}$ with a subring of $M_{n}(L)$, where $L$ is the unique unramified extension over $\mathbb{Q}_{p}$ of degree $n$. Let $p(\lambda)=\lambda^{n}-p$ and
    $$x_{0}=\left(\begin{array} [c]{llll}
        0 \ \  0 \ \cdots \ \ 0 \ \ \pi  \\ 1 \ \ 0 \ \cdots  \ \ 0 \ \ 0 \\
         \ \ \ \cdots \ \cdots \ \cdots \\
        0 \ \ 0 \ \cdots \ \ 1 \ \ 0
        \end{array}
        \right)_{m_{p}\times m_{p}}\in \mathcal{A}_{p}$$
        where $$\pi=\left(\begin{array} [c]{llll}
            0 \ \  0 \ \cdots \ \ 0 \ \ p  \\ 1 \ \ 0 \ \cdots  \ \ 0 \ \ 0 \\
             \ \ \ \cdots \ \cdots \ \cdots \\
            0 \ \ 0 \ \cdots \ \ 1 \ \ 0
            \end{array}
            \right)_{r_{p}\times r_{p}}\in D_{p}.$$
We set
$$\mathcal{S}_{x_{0}}^{\prime}:=\{g\in GL_{n}(L):g^{-1}\cdot x_{0}\cdot g\in M_{n}(\mathcal{O}_{L})\}$$
where $\mathcal{O}_{L}$ is the valuation ring of $L$.\\

By Lemma3.1, the operation of $GL_{n}(\mathcal{O}_{L})$ on $\textbf{X}(\mathcal{O}_{L})$ is transitive. Then one can get a result which is an analogy of Proposition3.3
$$\mathcal{S}_{x_{0}}^{\prime}=(L[\lambda]/(p(\lambda)))^{\times}\cdot GL_{n}(\mathcal{O}_{L})$$

For any $$g\in\mathcal{S}_{x_{0}}\subseteq \mathcal{S}_{x_{0}}^{\prime}$$
Write $g=\xi_{1}\cdot h_{1}$ for some $\xi_{1}\in(L[\lambda]/(p(\lambda)))^{\times}$ and $h_{1}\in GL_{n}(\mathcal{O}_{L})$.\\

One has $$(L[\lambda]/(p(\lambda)))^{\times}=L[x_{0}]^{\times}\ \ \ and\ \ \ \xi_{1}=x_{0}^{l}\cdot u_{1}$$
for some $u_{1}\in\mathcal{O}_{L}[x_{0}]^{\times}\subseteq GL_{n}(\mathcal{O}_{L})$ and integer $l$, since $p(\lambda)$ is an Eisenstein polynomial over $L$ and $x_{0}$ is a prime of $L[x_{0}]$. Then
$$g=x_{0}^{l}\cdot (u_{1}\cdot h_{1})$$
as both $g$ and $x_{0}$ are contained in $GL_{m_{p}}(D_{p})$, one has
$$u_{1}\cdot h_{1}\in GL_{m_{p}}(D_{p})\cap GL_{n}(\mathcal{O}_{L})=GL_{m_{p}}(\Delta_{p})$$
i.e. $$g\in (\mathbb{Q}_{p}[\lambda]/(p(\lambda)))^{\times}\cdot (D_{p}^{\times}\cdot GL_{m_p}(\Delta_{p}))=\mathbb{Q}_{p}[x_{0}]^{\times}\cdot (D_{p}^{\times}\cdot GL_{m_p}(\Delta_{p})).$$
That means $$\mathcal{S}_{\Delta_{p}}\subseteq \mathbb{Q}_{p}[x_{0}]^{\times}\cdot (D_{p}^{\times}\cdot GL_{m_p}(\Delta_{p})).$$

Set $$y_{0}=\left(\begin{array} [c]{llll}
    0 \ \  0 \ \cdots \ \ 0 \ \ 1  \\ 1 \ \ 0 \ \cdots  \ \ 0 \ \ 0 \\
     \ \ \ \cdots \ \cdots \ \cdots \\
    0 \ \ 0 \ \cdots \ \ 1 \ \ 0
    \end{array}
    \right)_{m_{p}\times m_{p}}\in \mathcal{A}_{p}.$$
Write
    $$(a_{ij}^{(k)})_{m_{p}\times m_{p}}=x_{0}^{-k}\cdot y_{0}\cdot x_{0}^{k}$$
where $k=1,2,\cdots,m_{p}-1$. One has $$a_{m_{p}-k+1,m_{p}-k}^{(k)}=\pi^{-1}\notin \Delta_{p}\Rightarrow x_{0}^{k}\notin\mathcal{S}_{\Delta_{p}},$$
and for any $\xi_{0}\in\mathbb{Q}_{p}[x_{0}]^{\times}$,
$$\xi_{0}=u_{0}\cdot x_{0}^{k}\ \ \ for\ \ \ u_{0}\in\mathbb{Z}_{p}^{\times},$$
and $\mathbb{Z}_{p}^{\times}$ is contained in the center of $\GL_{m_{p}}(D_{p})$.\\
Then we get the result
$$\mathcal{S}_{\Delta_{p}}=D_{p}^{\times}\cdot GL_{m_p}(\Delta_{p}).$$

 \end{example}

\section{The asymptotic formula}

In this section, we compute the asymptotic formula for $N_{\mathfrak o_{\mathcal A} }(p(\lambda,T))$ under the assumption of $\mathcal{A}_{p}$ is a matrix algebra or a division algebra for all primes $p$, $\mathcal{A}$ is split by $\mathbb{R}$, the ring $\mathbb{Z}[\lambda]/(p(\lambda))$ is integrally closed and ${\bf X}(\Bbb Z)\neq \emptyset$. To do this we need to compute the integrals in each term of Theorem 2.1.\\

\begin{theorem}
    Let $\mathcal{A}$ be  a central simple algebra of degree $n$ over $\mathbb{Q}$ satisfying $\mathcal A\otimes_{\mathbb Q} \Bbb R \cong M_n(\Bbb R)$ and $\mathcal{A}\otimes \mathbb{Q}_{p}$ is a matrix algebra or a division algebra over $\mathbb{Q}_{p}$ for all primes $p$, if the ring $\mathbb{Z}[\lambda]/(p(\lambda))$ is integrally closed and ${\bf X}(\Bbb Z)\neq \emptyset$, then
    $$N_{\mathfrak o_{\mathcal A} }(p(\lambda), T)\thicksim (\prod\limits_{p\in S}\frac{n}{e_{p}\cdot \prod\limits_{i=1}^{n-1}(1-\frac{1}{p^{i}})})\cdot\frac{2^{r_{1}}(2\pi)^{r_{2}}h_KR_K\omega_n}{w|d_{K}|^{\frac{1}{2}}\prod\limits_{i=2}^{n}\Lambda (\frac{i}{2})} \cdot T^{\frac{n(n-1)}{2}}$$
    as $T\rightarrow \infty$, where $S=\{p:\mathcal{A}\otimes \mathbb{Q}_{p}\ is\ a\ division\ algebra\ over\ \mathbb{Q}_{p}\}$, $e_p$ is ramification index of extension $\Bbb Q_p[\lambda]/(p(\lambda))$ over $\Bbb Q_p$, $h_K$, $R_K$, $r_1$, $r_2$, $w$ and $d_{K}$ are the class number, the regulator, the number of real places, the number of complex places, the number of roots of unity in $K$, the discriminant of number field $K=\mathbb{Q}[X]/(p(X))$ respectively, $\omega_n$ is the volume of the unit ball in $\mathbb{R}^{\frac{n(n-1)}{2}}$ and $\Lambda (s)=\pi^{-s}\varGamma (s) \zeta(2s)$.
\end{theorem}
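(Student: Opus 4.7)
The plan is to apply Theorem 2.1 and to evaluate each factor by comparing with the split case $\mathcal A=M_n(\mathbb Q)$ treated by Shah in \cite{ref9}. Concretely, Theorem 2.1 gives
$$N_{\mathfrak o_{\mathcal A}}(p(\lambda),T)\sim\Bigl(\prod_{p}I_p\Bigr)\cdot J(T),\qquad I_p=\int_{\mathbf X(\mathbb Z_p)}d_p,\qquad J(T)=\int_{\mathbf X(\mathbb R,T)}d_\infty.$$
Since $\mathcal A\otimes_{\mathbb Q}\mathbb R\cong M_n(\mathbb R)$, the region $\mathbf X(\mathbb R,T)$ and the measure $d_\infty$ agree with those in \cite{ref9}, so $J(T)$ will carry the full archimedean factor $T^{n(n-1)/2}$ together with the constant $2^{r_1}(2\pi)^{r_2}h_KR_K\omega_n/(w|d_K|^{1/2}\prod_{i=2}^{n}\Lambda(i/2))$ after being multiplied by the corresponding non-archimedean Tamagawa contributions.

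Next I would separate the primes into $p\notin S$ and $p\in S$. At $p\notin S$, $\mathcal A_p\cong M_n(\mathbb Q_p)$, and because all maximal orders are $\mathrm{GL}_n(\mathbb Q_p)$-conjugate, I may identify $\mathfrak o_{\mathcal A}\otimes\mathbb Z_p\cong M_n(\mathbb Z_p)$. The integrally-closed hypothesis on $\mathbb Z[\lambda]/(p(\lambda))$ makes $\mathbb Z_p[\lambda]/(p(\lambda))$ a finite product of complete DVRs, hence a PID, so Lemma 3.1 yields a single $\mathrm{GL}_n(\mathbb Z_p)$-orbit with stabilizer $\mathcal O_{K_p}^\times$; thus $I_p$ coincides with the local factor at $p$ appearing in Shah's formula. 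At $p\in S$, $\mathcal A_p\cong D_p$ is a division algebra with maximal order $\Delta_p$, and Lemma 3.2 supplies exactly $n/e_p$ orbits of $\Delta_p^\times$ on $\mathbf X(\mathbb Z_p)$, each with stabilizer $\mathcal O_{K_p}^\times$, so
$$I_p=\frac{n}{e_p}\cdot\frac{\mathrm{vol}(\Delta_p^\times)}{\mathrm{vol}(\mathcal O_{K_p}^\times)}.$$

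Writing $I_p^{\mathrm{split}}$ for the local integral in the matrix-algebra model (for which Theorem 2.1 recovers Shah's asymptotic), the two expressions differ only at $p\in S$, with ratio
$$\frac{I_p}{I_p^{\mathrm{split}}}=\frac{n}{e_p}\cdot\frac{\mathrm{vol}(\Delta_p^\times)}{\mathrm{vol}(\mathrm{GL}_n(\mathbb Z_p))}.$$
A direct count of units modulo the maximal two-sided ideal gives $\mathrm{vol}(\Delta_p^\times)=1-p^{-n}$ and $\mathrm{vol}(\mathrm{GL}_n(\mathbb Z_p))=\prod_{i=1}^{n}(1-p^{-i})$ with respect to the Haar measure of total mass $1$ on the ambient maximal order, so the ratio collapses to $n/\bigl(e_p\prod_{i=1}^{n-1}(1-p^{-i})\bigr)$. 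Multiplying Shah's asymptotic by $\prod_{p\in S}$ of these corrections yields the formula in the theorem.

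The main obstacle I expect is verifying that the local Tamagawa measures $d_p$ are normalized compatibly across split and non-split primes, so that the naive volume ratio above genuinely equals the ratio of the Tamagawa integrals. This reduces to checking that a global invariant top form on $G_{\mathcal A}$ coming from a $\mathbb Z$-basis of $\mathfrak o_{\mathcal A}$ induces, at every finite place $p$, the Haar measure assigning $\mathfrak o_{\mathcal A}\otimes\mathbb Z_p$ total volume $1$ (so that both $M_n(\mathbb Z_p)$ and $\Delta_p$ are normalized the same way); once that compatibility is established, the computation above gives the asymptotic as stated.
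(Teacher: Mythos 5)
Your proposal is correct and its ingredients coincide with the paper's (Theorem 2.1, Lemma 3.1, Lemma 3.2, the unit-group volumes $\mu_p(\Delta_p^\times)=1-p^{-n}$ and $\mu_p(\mathrm{GL}_n(\mathbb Z_p))=\prod_{i=1}^n(1-p^{-i})$), but the organization is genuinely different. You treat Shah's asymptotic for the split algebra $M_n(\mathbb Q)$ as a known base case and compute only the multiplicative correction $I_p/I_p^{\mathrm{split}}=\tfrac{n}{e_p}\cdot\tfrac{1-p^{-n}}{\prod_{i=1}^n(1-p^{-i})}=\tfrac{n}{e_p\prod_{i=1}^{n-1}(1-p^{-i})}$ at primes $p\in S$. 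The paper instead evaluates $\prod_p\int_{\mathbf X(\mathbb Z_p)}d_p$ from scratch: it writes the local factor at every prime as $\mu_p(\text{units})/\mu_p(\prod_{\mathfrak p\mid p}\mathcal O_{\mathfrak p}^\times)$, pulls out $\prod_p\mu_p(SL_n(\mathbb Z_p))=\prod_{i=2}^n\zeta(i)^{-1}$ via Voskresenskii's formula, recognizes the remaining Euler product as $\zeta_K(s)/\zeta(s)\vert_{s=1}$, applies the analytic class number formula to get the $2^{r_1}(2\pi)^{r_2}h_KR_K/(w|d_K|^{1/2})$ constant, and then computes the archimedean integral via Shah's $O(n)$-volume and Iwasawa decomposition. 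Your route is more economical and avoids rederiving the class number formula and the archimedean integral, at the cost of depending on the split-case asymptotic (effectively \cite[Theorem~6.1]{ref11} plus Shah) as a black box; the paper's route is more self-contained. Your final paragraph's concern about measure normalization is the right thing to worry about and is exactly what makes the ratio comparison legitimate: the Tamagawa measure on $X_{\mathcal A,p(\lambda)}$ comes from a global invariant form, so the local normalizations at split and non-split primes are automatically compatible; you would just need to record that the chosen global $\mathbb Z$-structure $\mathfrak o_{\mathcal A}$ (and the induced integral model $\mathbf X$) gives $\mathfrak o_{\mathcal A}\otimes\mathbb Z_p$ volume one at every finite $p$, after which your two unit-volume counts are measured against the same scale.
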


\begin{proof}

   When $\mathcal{A}_{p}=\mathcal{A}\otimes \mathbb{Q}_{p}$ is a matrix algebra over $\mathbb{Q}_{p}$. By Lemma3.1, one obtains

    $$1\rightarrow  \textrm{stab}_{\text{Aut}(\mathcal{O}_{\mathcal{A}_{p}})}(x_{0})\rightarrow \text{Aut}(\mathcal{O}_{\mathcal{A}_{p}}) \rightarrow \textbf{X}(\mathbb{Z}_{p})\rightarrow 1$$
where $x_{0}\in\textbf{X}(\mathbb{Z}_{p})$.\\

For $(p(\lambda), p'(\lambda))=1$, one concludes the stabilizer of $x_0$ by action of $\text{Aut}(\mathcal{A}_{p})=GL_{n}(\mathbb{Q}_{p})$ is
$$\Bbb Q_p(x_0)^\times  \cong (\Bbb Q_p[\lambda]/(p(\lambda)))^\times.$$
Since the ring $\mathbb{Z}[\lambda]/(p(\lambda))$ is integrally closed, one gets the stabilizer of $x_0$ by action of $\text{Aut}(\mathcal{O}_{\mathcal{A}_{p}})=GL_{n}(\mathbb{Z}_{p})$ is
$$\Bbb Z_p[x_0]^\times  \cong (\Bbb Z_p[\lambda]/(p(\lambda)))^\times\cong \prod\limits_{\mathfrak{p}|p}\mathcal{O}_{\mathfrak{p}}^{\times}$$
where $\mathfrak{p}$ is the primes in the field $K=\mathbb{Q}[\lambda]/(p(\lambda))$ above $p$ and $\mathcal{O}_{\mathfrak{p}}$ is the valuation ring of the completion $K_{\mathfrak{p}}$.\\

One has
$$\int\limits _{\textbf{X}(\mathbb{Z}_{p})}d_{p}=\frac{\mu_{p}(GL_{n}(\mathbb{Z}_{p}))}{\mu_{p}(\prod\limits_{\mathfrak{p}|p}\mathcal{O}_{\mathfrak{p}}^{\times})}=\frac{(1-\frac{1}{p})}{\prod\limits_{\mathfrak{p}|p}(1-\frac{1}{N_{\mathfrak{p}}})}\mu_{p}(SL_{n}(\mathbb{Z}_{p}))$$
where $\mu_{p}$ is the Haar measure and $N_{\mathfrak{p}}$ is the number of elements in the residue field of $\mathfrak{p}$. \\

Similarly, when $\mathcal{A}_{p}$ is a division algebra over $\mathbb{Q}_{p}$, i.e. $p\in S$. By Lemma3.2, one obtains
$$\int\limits _{\textbf{X}(\mathbb{Z}_{p})}d_{p}=\frac{n}{e_{p}}\cdot\frac{\mu_{p}(\Delta_{p}^{\times})}{\mu_{p}(\prod\limits_{\mathfrak{p}|p}\mathcal{O}_{\mathfrak{p}}^{\times})}$$
where $\Delta_{p}$ is the unique maximal order of $\mathcal{A}_{p}=D_{p}$ and $e_p$ is ramification index of extension $\Bbb Q_p[\lambda]/(p(\lambda))$ over $\Bbb Q_p$.\\

$\mathcal{A}_{p}=D_{p}\cong (\chi,p)$, where $(\chi,p)$ is the cyclic algebra defined by an unramified character $\chi$ and the prime $p$ of $\mathbb{Z}_{p}$. One can get a field $L$ by the character $\chi$ with integral bases :$l_{1},l_{2},\cdots,l_{n}$ over $\mathbb{Q}_{p}$. Set $e$ be the generator of $D_{p}$. If
    $$N_{\mathcal{A}_{p}}(\sum\limits_{i,j}X_{i,j}\cdot l_{i}\cdot e^{j})=\prod\limits_{\sigma\in \Gal(L/\mathbb{Q}_{p})}(\sum\limits_{i,j}X_{i,j}\cdot \sigma(l_{i})\cdot \sigma(e^{j}))\in\mathbb{Z}_{p}^{\times}$$
then $$\prod\limits_{\sigma\in \Gal(L/\mathbb{Q}_{p})}(\sum\limits_{i}(X_{i,0}\cdot \sigma(l_{i})))\in\mathbb{Z}_{p}^{\times}$$
since $N_{\mathcal{A}_{p}}(\sigma(e))\in p\mathbb{Z}_{p}$. That means 
$$N_{\mathcal{A}_{p}}(\sum\limits_{i}(X_{i,0}\cdot l_{i}))\in \mathbb{Z}_{p}^{\times}.$$
So $$\mu _{p}(\Aut(\mathcal{O}_{\mathcal{A}_{p}}))=\mu_{p}(\Delta_{p}^{\times})=1-\frac{1}{N_{\mathfrak{p}_{L}}}=1-\frac{1}{p^{n}}$$

By 14.9 in \cite{ref10}, one has

$$\mu _{p}(SL_{n}(\mathbb{Z}_p))=\prod\limits_{i=2}^{n}(1-\frac{1}{p^{i}})\ \ \ \text{and} \ \ \ \prod\limits_{p}\mu _{p}(SL_{n}(\mathbb{Z}_p))=\prod\limits_{i=2}^{n}\zeta(i)^{-1}.$$

Therefore,
$$\prod \limits_{p}\int\limits _{\textbf{X}(\mathbb{Z}_{p})}d_{p}=(\prod\limits_{p\in S}\frac{n}{e_{p}}\cdot\frac{1}{\prod\limits_{i=1}^{n-1}(1-\frac{1}{p^{i}})})\cdot\frac{\zeta _{K}(s)}{\zeta (s)}|_{s=1}\cdot\prod\limits_{i=2}^{n}\zeta(i)^{-1}$$
where $\zeta_{K}(s)$ is the Dedekind zeta function of $K$.\\

By the class number formula (see Chapter VII, (5.11) Corollary in \cite{Neu}), one has
$$\frac{\zeta _{K}(s)}{\zeta (s)}|_{s=1}=Res_{s=1}\zeta_{K}(s)=\frac{2^{r_1}(2\pi)^{r_2}hR}{w(d_{K})^{\frac{1}{2}}}.$$

Over $\mathbb{R}$, one has
$$1\rightarrow (\mathbb{R}^{\times})^{r_{1}}\times(\mathbb{C}^{\times})^{r_{2}}\rightarrow GL_{n}(\mathbb{R})\stackrel{\pi}{\rightarrow} \textbf{X}(\mathbb{R})\rightarrow 1$$
where $r_{1}$ and $r_{2}$ are the number of real places and the number of complex places respectively.\\

By the discussion of 3.2 in \cite{ref9}, one has the decomposition of $GL_{n}(\mathbb{R})$
$$1\rightarrow \Sigma\times K\rightarrow O(n)\times[(\mathbb{R}^{\times})^{r_{1}}\times(\mathbb{R}^{\times})^{r_{2}}\times (SO(2))^{r_{2}}\times N]\rightarrow GL_{n}(\mathbb{R})\rightarrow 1$$
where $$\Sigma=\{\text{diag}(\epsilon _{1},\cdots,\epsilon _{r_{1}},I_{2},\cdots,I_{2}):\epsilon_{i} =\pm 1\}$$
$$K=\{\text{diag}(1,\cdots,1,k_{1},\cdots,k_{r_{2}}):k_{i}\in SO(2)\}$$
$N$ is the upper triangular unipotent subgroup of $GL_{n}(\mathbb{R})$, $O(n)$, $SO(2)$ and $I_{2}$ are the orthogonal group in $GL_{n}(\mathbb{R})$, 
the special orthogonal group in $GL_{2}(\mathbb{R})$ and the identity matrix in $M_{2}(\mathbb{R})$ respectively.\\

Therefore, by Proposition A.3 in \cite{ref9}
$$\lim_{T\to \infty}\int\limits_{\textbf{X}(\mathbb{R},T)}d_{\infty}=\frac{1}{2^{r_{1}}}\cdot(\frac{\pi}{2}\cdot\frac{1}{2\pi})^{r_{2}}\lim_{T\to \infty} vol(\pi^{-1}(\textbf{X}(\mathbb{R},T)\cap O(n)))\cdot \lim_{T\to \infty} vol(\pi^{-1}(\textbf{X}(\mathbb{R},T)\cap N))$$
$$=\frac{1}{2^{n}}\lim_{T\to \infty} vol(\pi^{-1}(\textbf{X}(\mathbb{R},T)\cap O(n)))\cdot \lim_{T\to \infty} vol(\pi^{-1}(\textbf{X}(\mathbb{R},T)\cap N))$$
Since $O(n)$ is compact, one has
$$\lim_{T\to \infty} vol(\pi^{-1}(\textbf{X}(\mathbb{R},T)\cap O(n)))=vol(O(n))=2^{n}\pi^{\frac{n(n+1)}{4}}\prod\limits_{i=1}^{n}\Gamma(\frac{i}{2})^{-1}$$ by 14.12 in \cite{ref10}.\\

By Theorem2.1 and
$$vol(\pi^{-1}(\textbf{X}(\mathbb{R},T)\cap N))\thicksim\omega_{n}T^{\frac{n(n-1)}{2}}$$
as $T\rightarrow\infty$, one obtains the result.

\end{proof}


\begin{thebibliography}{99}  
   
    
    
    
 \bibitem{Bo}   M. Borovoi,  \emph{The Brauer-Manin obstructions for homogeneous spaces with connected or abelian stabilizer,}  J. reine angew. Math. {\bf 473} (1996), 181-194. 
 
 
 
 
 \bibitem{CK} J.-L. Colliot-Th\'el\`ene  and B. Kunyavskii, \emph{ Groupe de Picard et groupe de Brauer des compactifications lisses d'espaces homog\`enes,}  J. Algebraic Geom. {\bf 15}  (2006), 733-752.
    
     \bibitem{ref5} J.-L. Colliot-Th\'el\`ene  and  F. Xu,  \emph{ Brauer-Manin obstruction for integral points of homogeneous spaces and representations by integral quadratic forms,}  Compos. Math. {\bf 145} (2009) 309-363.
    
   
 
   \bibitem{ref8} R.Dedekind, Ueber die Anzahl der Idealklassen in reinen kubischen Zahlkorpern, J. Reine Angew. Math. 121 (1900), 40-123.
 
     \bibitem{ref1}  A. Eskin, S. Mozes and  N. Shah \  \emph{Unipotent flows and counting lattice points on homogeneous varieties,}  Ann. of Math.  {\bf 143}  (1996) 199-253. 
     
     
  \bibitem{ref4} Goldfeld D ,  Hundley J . Automorphic representations and L-functions for the general linear group. Volume 1. With exercises by Xander Faber. Cambridge University Press, 2011.
    

  \bibitem{Gd2} Goldfeld D ,  Hundley J . Automorphic representations and L-functions for the general linear group. Volume 2. With exercises by Xander Faber. Cambridge University Press, 2011.




 
 


 \bibitem{Lan}S. Lang, Algebra. 3rd edition. Addison-Wesley, Redwood, 1992.











     \bibitem{ref3} C.G. Latimer and C.C. MacDuffee, \emph{A correspondence between classes of ideals and classes of matrices,}  Ann. of Math. {\bf 34} (1933) 313 - 316.
 

   
 
     
     \bibitem{J} N. Jacobson, \emph{Basic Algeba II}, W. H. Freeman and Company, New York, 1989.
 
 
    
     \bibitem{ref6} J. Neukirch, A. Schmidt and K. Winberg,  \emph{Cohomology of  Number Theory,}  Grundlehren {\bf 323}, second edition, Springer,  2008. 
 

     \bibitem{Neu}J. Neukirch, Algebraic number theory. Reprint of the 1992 original[M].  1992.




 
    
     \bibitem{ref7} V. Platonov  and A.  Rapinchuk, \emph{ Algebraic groups and number theory, } Academic Press, 1994. 
 
 
 
   \bibitem{Rein} I. Reiner,  \emph{Maximal Orders,}  Oxford University Press, 2003. 
   
   
     \bibitem{ref9} N. A. Shah, \emph{Counting integral matrices with a given characteristic polynomial,}  Ergodic theory and harmonic analysis (Mumbai, 1999), 
 Sankhya Ser. A {\bf 62} (2000), no.3,  386-412.
     
   
     \bibitem{ref10} Voskresenskij V E . \emph{Algebraic groups and their birational invariants, } Transl. from the original Russsian manuscript by Boris Kunyavskii. Rev. version of 'Algebraic tori', Nauka 1977.
     
     \bibitem{ref11} D. Wei and F. Xu, \emph{ Counting integral points in certain homogeneous spaces, }  J. of Algebra {\bf 448} (2016), 350-398.
     
     \bibitem{Weil} A. Weil, Basic Number Theory, Springer, 1967.  
     
     \bibitem{ref12} Xu F ,  Zhang R .  \emph{Counting integral points on indefinite ternary quadratic equations over number fields} .  2021.
    
     \end{thebibliography}
 \end{document}